\titlespacing*{\section}{0pt}{3ex}{2ex}
\titlespacing*{\subsection}{0pt}{2.5ex}{1.5ex}
\newtheoremstyle{mystyle}
{3pt} 
{3pt} 
{\normalfont} 
{0pt} 
{\bfseries} 
{.} 
{5pt plus 1pt minus 1pt} 
{} 
\theoremstyle{mystyle}
\newtheorem{theorem}{Theorem}[section]
\newtheorem{lemma}[theorem]{Lemma}
\newtheorem{definition}[theorem]{Definition}
\newtheorem{remark}[theorem]{Remark}
\newtheorem{proposition}[theorem]{Proposition}
\title{On Trivial Cyclically Covering Subspaces of $\mathbb{F}_q^n$ in Non-Coprime Characteristic}
\author[1]{Shuang Li, Ph.D.}
\author[1,*]{Pingzhi Yuan}
\affil[1]{School of Mathematics, South China Normal University, Guangzhou 510631, China}
\affil[2]{School of Mathematics, South China Normal University, Guangzhou 510631, China}
\date{}
\begin{document}
	
	\maketitle
	
	\begin{abstract}
		A subspace $U$ of $\mathbb{F}_q^n$ is called \textit{cyclically covering} if the whole space $\mathbb{F}_q^n$ is the union of the cyclic shifts of $U$. The case $\mathbb{F}_q^n$ itself is the only covering subspace, is of particular interest. Recently, Huang solved this problem completely under the condition $\gcd(n, q)=1$ using primitive idempotents and trace functions, and explicitly posed the non-coprime case as an open question.
		This paper provides a complete answer to Huang's question. We prove that if $n = p^k m$ where $p = \operatorname{char}(\mathbb{F}_q)$ and $\gcd(m, p)=1$, then $h_q(p^k m) = 0$ if and only if $h_q(m) = 0$. This result fully reduces the non-coprime case to the coprime case settled by Huang. Our proof employs the structure theory of cyclic group algebras in modular characteristic.
	\end{abstract}

	\textbf{MSC 2020:} 11T06, 11T55, 12E20\\
\indent	\textbf{Keywords:} Finite field; Trace function

\noindent\rule{\textwidth}{0.4pt}

	\small
	*Corresponding Author

E-mail addresses: 2338482253@qq.com (S. Li),  mcsypz@mail.sysu.edu.cn (P. Yuan)

	\section{Introduction}
	
	\subsection{Background and Problem Statement}
	Let $\mathbb{F}_q$ be the finite field of order $q$ and let $\mathbb{F}_q^n$ be the $n$-dimensional vector space over $\mathbb{F}_q$. The \textit{cyclic shift} operator $\sigma$ on $\mathbb{F}_q^n$ is defined by
	\[
	\sigma(x_0, x_1, \ldots, x_{n-1}) = (x_{n-1}, x_0, \ldots, x_{n-2}).
	\]
	For a subspace $U \subseteq \mathbb{F}_q^n$, denote by $\sigma^i(U)$ its $i$-th cyclic shift. The subspace $U$ is called \textit{cyclically covering} if
	\[
	\mathbb{F}_q^n = \bigcup_{i=0}^{n-1} \sigma^i(U).
	\]
	Define the function
	\[
	h_q(n) = n - \min\{\dim U \mid U \text{ is a cyclically covering subspace of } \mathbb{F}_q^n\}.
	\]
	Thus $h_q(n)$ represents the largest possible codimension of a cyclically covering subspace. In particular, $h_q(n) = 0$ if and only if the only subspace covering $\mathbb{F}_q^n$ under cyclic shifts is $\mathbb{F}_q^n$ itself.
	
	The study of $h_q(n)$ was initiated by Cameron, Ellis and Raynaud \cite{CER2019}, who established its connection to Isbell's conjecture in combinatorial game theory \cite{Isbell1957, Isbell1960} and to a problem posed by Cameron \cite{Cameron1994}. They proved several foundational results, including that $h_2(n)=0$ if and only if $n$ is a power of 2, and that $h_q(k s^d)=0$ if $k \mid q-1$, where $s = \operatorname{char}(\mathbb{F}_q)$. They also showed the general upper bound $h_q(n) \leq \lfloor \log_q(n) \rfloor$.
	
	\vspace{0.8em}
	
	Aaronson, Groenland and Johnston \cite{AGJ2021} made significant further progress, particularly for $q=2$. Among their results, they showed that $h_2(p)=2$ for any prime $p > 3$ such that 2 is a primitive root modulo $p$, and established that $h_q(\ell s^d)=0$ for any positive integer $\ell \leq q$ and any non-negative integer $d$. They formally posed the central problem:
	\begin{quote}
		\textbf{Problem 1.1.} For which $n$ is $h_q(n)=0$?
	\end{quote}
	
	\subsection{The Coprime Case: Huang's Complete Solution}
	The case where $\gcd(n, q)=1$ was completely resolved by Huang \cite{Huang2024}. Using the theory of primitive idempotents in the cyclic group algebra $\mathbb{F}_q C_n$ and properties of the field trace function, Huang proved a necessary and sufficient condition for $h_q(n)=0$ when $n$ and $q$ are coprime.
	
	\vspace{0.8em}
	
	Let $\theta$ be a primitive $n$th root of unity over $\mathbb{F}_q$, $m$ the degree of its minimal polynomial, and $\Gamma_0, \Gamma_1, \ldots, \Gamma_{r-1}$ the $q$-cyclotomic cosets modulo $n$ with representatives $k_0, k_1, \ldots, k_{r-1}$. For each $t$, let $m_t$ be the size of $\Gamma_t$ (so $\mathbb{F}_{q^{m_t}} = \mathbb{F}_q(\theta^{k_t})$). Huang's main result states:
	
	\begin{theorem}[Huang \cite{Huang2024}, Theorem 3.7]
		Let $\gcd(n, q)=1$. Then $h_q(n)=0$ if and only if for each $t = 0, 1, \ldots, r-1$, there exists a coset of $\langle \theta^{k_t} \rangle$ in the multiplicative group $\mathbb{F}_{q^{m_t}}^\times$ such that $\operatorname{Tr}_{\mathbb{F}_q}^{\mathbb{F}_{q^{m_t}}}(x) \neq 0$ for every $x$ in this coset.
	\end{theorem}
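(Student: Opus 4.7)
I would work in the group algebra $R := \mathbb{F}_q[X]/(X^n-1)$, identifying $\mathbb{F}_q^n$ with $R$ via $(v_0,\dots,v_{n-1}) \mapsto \sum_{i=0}^{n-1} v_i X^i$ so that $\sigma$ corresponds to multiplication by $X$. Since $\gcd(n,q)=1$, Maschke's theorem makes $R$ semisimple, and the Chinese Remainder Theorem yields a ring isomorphism
\[
R \;\cong\; \prod_{t=0}^{r-1} R_t, \qquad R_t := \mathbb{F}_q[X]/(f_t(X)) \cong \mathbb{F}_{q^{m_t}},
\]
under which $\sigma$ acts on $R_t$ as multiplication by $\theta^{k_t}$. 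Writing $v_t$ for the $R_t$-component of $v \in R$, non-degeneracy of the trace form on each $R_t$ shows that every $\mathbb{F}_q$-linear functional $\ell \colon R \to \mathbb{F}_q$ admits a unique representation
\[
\ell(v) \;=\; \sum_{t=0}^{r-1} \operatorname{Tr}_{q^{m_t}/q}(\beta_t\, v_t), \qquad \beta_t \in R_t.
\]

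The plan is then to reduce to hyperplanes. Cyclic covering is preserved under enlarging $U$, so $h_q(n)>0$ if and only if some hyperplane $\ker\ell$ is cyclically covering. Using $\ell(\sigma^i v) = \sum_t \operatorname{Tr}(\beta_t (\theta^{k_t})^i v_t)$, this occurs precisely when, for every $v \in R$, there exists an $i$ with
\[
\sum_{t=0}^{r-1} \operatorname{Tr}_{q^{m_t}/q}\bigl(\beta_t\, (\theta^{k_t})^i\, v_t\bigr) \;=\; 0.
\]
The theorem then reduces to showing that such a nonzero tuple $(\beta_t)$ exists if and only if Huang's condition fails at some index $t^*$.

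For the reverse direction, suppose Huang's condition fails at some $t^*$. Set $\beta_{t^*} = 1$ and $\beta_t = 0$ for $t \neq t^*$; the criterion becomes the existence, for every $v_{t^*} \in R_{t^*}$, of an $i$ with $\operatorname{Tr}((\theta^{k_{t^*}})^i v_{t^*}) = 0$, and this holds because every coset of $\langle \theta^{k_{t^*}} \rangle$ in $\mathbb{F}_{q^{m_{t^*}}}^{\times}$ contains a zero-trace element (the case $v_{t^*}=0$ being trivial). Thus $\ker\ell$ is a proper cyclically covering subspace. Conversely, suppose some hyperplane $\ker\ell$ cyclically covers. Pick $t^*$ with $\beta_{t^*} \neq 0$ and feed the criterion vectors $v$ supported only at $t^*$; this forces that for every $v_{t^*} \in R_{t^*}$ some $i$ satisfies $\operatorname{Tr}(\beta_{t^*} (\theta^{k_{t^*}})^i v_{t^*}) = 0$. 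The substitution $a := \beta_{t^*} v_{t^*}$, a bijection of $R_{t^*}$, identifies this as precisely the negation of Huang's condition at $t^*$.

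The main obstacle is conceptual rather than technical: one must recognize that cyclic covering by a hyperplane is governed coordinate-by-coordinate under the CRT decomposition, so that a single nonzero $\beta_{t^*}$ already suffices to produce the Huang obstruction at $t^*$ via one-component test vectors. Once this is seen, the remaining ingredients --- semisimplicity of $R$, trace-duality of functionals on each $R_t$, and the monotonicity of cyclic covering under enlargement of $U$ --- are routine consequences of the coprimality hypothesis $\gcd(n,q)=1$.
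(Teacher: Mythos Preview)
The paper does not give a proof of this statement; it is quoted verbatim as Huang's Theorem~3.7 and used as a black box in the proof of the Main Reduction Theorem. So there is no in-paper proof to compare against.

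That said, your argument is correct and is essentially the canonical one. The reduction to hyperplanes via monotonicity, the CRT decomposition of $R=\mathbb{F}_q[X]/(X^n-1)$ into fields $R_t\cong\mathbb{F}_{q^{m_t}}$, the trace-form parametrization of linear functionals, and the coordinate-by-coordinate testing are exactly the moves one expects, and they match in spirit the machinery the present paper builds in Section~3 for the non-coprime case (Proposition~\ref{prop:hyperplane} and the trace computations of \S3.5--3.6 are the $u$-deformed analogues of your steps). Both directions of your equivalence are clean: the forward direction correctly isolates a single index $t^*$ with $\beta_{t^*}\neq 0$ and uses test vectors supported there, and the bijection $a=\beta_{t^*}v_{t^*}$ is valid because $R_{t^*}$ is a field. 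The reverse direction correctly produces a nonzero $\ell$ (since $\beta_{t^*}=1$ and the trace form is nondegenerate) whose kernel is a proper covering hyperplane.

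One small presentational point: when you say ``for every $v_{t^*}\in R_{t^*}$ some $i$ satisfies $\operatorname{Tr}((\theta^{k_{t^*}})^i v_{t^*})=0$'' you are implicitly using that the orbit $\{(\theta^{k_{t^*}})^i v_{t^*}\}$ is exactly the coset $v_{t^*}\langle\theta^{k_{t^*}}\rangle$; it would do no harm to say this explicitly, since that is precisely where the coset language in Huang's statement enters.
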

	
	\vspace{0.8em}
	
	As applications, Huang derived several infinite families of parameters with $h_q(n)=0$, such as $h_q(p^d(q+1))=0$ for any odd prime $p$ and $q$ a power of $p$ \cite[Corollary 3.8]{Huang2024}.
	
	\vspace{0.8em}
	
	In the conclusion of his paper, Huang explicitly identified the remaining challenge:
	\begin{quote}
		``A possible direction for future work is to find a necessary and sufficient condition under which $h_q(n)=0$, where $q$ and $n$ are not coprime.'' \cite{Huang2024}
	\end{quote}
	
	\subsection{Our Contribution}
	This paper provides a complete answer to Huang's question. Our main result shows that when the characteristic $p$ of $\mathbb{F}_q$ divides $n$, the problem reduces entirely to the coprime case.
	
	\vspace{0.8em}
	
	\begin{theorem}[Main Theorem]
		Let $q$ be a prime power of characteristic $p$, and let $n$ be a positive integer. Write $n = p^k m$ with $\gcd(m, p) = 1$. Then
		\[
		h_q(n) = 0 \quad \text{if and only if} \quad h_q(m) = 0.
		\]
	\end{theorem}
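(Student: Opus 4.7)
The plan is to translate the problem into the cyclic group algebra $R := \mathbb{F}_q[x]/(x^n-1)$, under which $\mathbb{F}_q^n \cong R$ and the cyclic shift $\sigma$ acts as multiplication by $x$. Since $\operatorname{char}(\mathbb{F}_q)=p$ and $n = p^k m$ with $\gcd(m,p)=1$, the factorization $x^n-1=(x^m-1)^{p^k}$ gives $R=\mathbb{F}_q[x]/(x^m-1)^{p^k}$. Writing $N:=x^m-1$, the powers of $N$ yield a filtration $R \supsetneq NR \supsetneq \cdots \supsetneq N^{p^k}R=0$ by $\sigma$-stable ideals, together with a $\sigma$-equivariant projection $\pi\colon R \twoheadrightarrow S:=\mathbb{F}_q[x]/(x^m-1)\cong \mathbb{F}_q^m$ under which $\sigma$ on $S$ coincides with the cyclic shift on $\mathbb{F}_q^m$.

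The implication $h_q(m)>0 \Rightarrow h_q(n)>0$ is straightforward: if $V\subsetneq S$ is proper cyclically covering, then $U:=\pi^{-1}(V)$ is proper in $R$ (dimension $\dim V + (n-m) < n$), and the $\sigma$-equivariance of $\pi$ shows that any $v\in R$ has some shift $x^i v$ in $U$ because the corresponding shift $x^i \pi(v)$ lies in $V$.

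For the reverse direction I would argue by descent along the filtration. Let $R^{(j)}:=\mathbb{F}_q[x]/(x^m-1)^j$ for $1\le j \le p^k$, each with $\sigma=$ multiplication by $x$, and let $\pi_j\colon R^{(j)}\twoheadrightarrow S$ be the quotient. The structural input is a $\sigma$-equivariant isomorphism $NR^{(j)} \cong R^{(j-1)}$ sending $Na\mapsto a \bmod (x^m-1)^{j-1}$, well-defined because $\operatorname{Ann}_{R^{(j)}}(N) = N^{j-1}R^{(j)}$. Given $\tilde U_j \subsetneq R^{(j)}$ cyclically covering (starting from $\tilde U_{p^k}:=U$), I split into cases: if $\pi_j(\tilde U_j) \subsetneq S$, then $\pi_j(\tilde U_j)$ is already a proper cyclically covering subspace of $S\cong \mathbb{F}_q^m$ (lift any $b\in S$ to $\tilde b\in R^{(j)}$, apply cyclic covering of $\tilde U_j$, and project back), giving $h_q(m)>0$; otherwise $\pi_j(\tilde U_j)=S$, forcing $\tilde U_j \cap NR^{(j)} \subsetneq NR^{(j)}$ by a dimension count, and I would set $\tilde U_{j-1}\subsetneq R^{(j-1)}$ to be the image of this intersection under the isomorphism above. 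This $\tilde U_{j-1}$ remains cyclically covering: given $b\in R^{(j-1)}$, lift to $\tilde b\in R^{(j)}$, use cyclic covering of $\tilde U_j$ to find $i$ with $x^i(N\tilde b)\in \tilde U_j \cap NR^{(j)}$, and transport under the iso to obtain $x^i b \in \tilde U_{j-1}$. Each step decreases $j$ by one, so after at most $p^k-1$ iterations we reach $j=1$, where $R^{(1)}=S=\mathbb{F}_q^m$ and $\tilde U_1$ is the desired proper cyclically covering subspace.

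The main technical point I expect to be the descent step: both verifying that cyclic covering is preserved under the isomorphism $NR^{(j)} \cong R^{(j-1)}$ and confirming that the process terminates in a subspace that genuinely corresponds to one in $\mathbb{F}_q^m$. A conceptual subtlety is that the intermediate $R^{(j)}$ (for $j$ not a power of $p$) are \emph{not} isomorphic to $\mathbb{F}_q^{jm}$ as cyclic-shift modules---the multiplicative order of $\sigma$ on $R^{(j)}$ need not equal $jm$---but this is harmless because the descent is an internal module-theoretic argument in $\mathbb{F}_q[\sigma]$-mod, and only the base case $j=1$ requires the literal identification $R^{(1)}\cong \mathbb{F}_q^m$ to convert back into a statement about $h_q(m)$.
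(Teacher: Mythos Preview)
Your proof is correct and takes a genuinely different route from the paper's.

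The paper proceeds by first establishing an algebra isomorphism $\mathbb{F}_qG \cong \mathbb{F}_qH[u]/\langle u^{p^k}\rangle$ (with $|H|=m$), then decomposing the semisimple part $\mathbb{F}_qH$ into fields $\mathbb{F}_{q^{d_t}}$ to obtain $\mathbb{F}_qG \cong \bigoplus_t \mathbb{F}_{q^{d_t}}[u]/\langle u^{p^k}\rangle$. A \emph{residue trace} (the field trace of the $u^{p^k-1}$-coefficient) is introduced to parametrize hyperplanes, and a short computation shows that the covering condition on each local piece collapses to Huang's trace condition for $m$. Thus the paper's argument passes \emph{through} Huang's characterization: it shows $h_q(n)=0 \iff$ (trace condition for $m$) $\iff h_q(m)=0$, invoking Huang's theorem for the second equivalence.

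Your argument is more elementary and entirely self-contained: you never need the semisimple decomposition, cyclotomic cosets, trace functions, or Huang's theorem. The filtration $R\supsetneq NR\supsetneq\cdots$ together with the $\sigma$-equivariant identification $NR^{(j)}\cong R^{(j-1)}$ lets you push a proper covering subspace down one level at a time, landing in $\mathbb{F}_q^m$ after at most $p^k-1$ steps. The only points worth tightening in a final write-up are (i) stating explicitly that ``cyclically covering'' for the intermediate modules $R^{(j)}$ means $\bigcup_{i\ge 0} x^i\tilde U_j = R^{(j)}$ (a purely $\mathbb{F}_q[\sigma]$-module condition, as you note), and (ii) recording that in Case~2 the equality $\tilde U_j + NR^{(j)} = R^{(j)}$ gives $\dim(\tilde U_j\cap NR^{(j)}) = \dim\tilde U_j - m < (j-1)m$, which is the dimension count you allude to. What the paper's approach buys in exchange for its heavier machinery is an explicit structural picture of $\mathbb{F}_qG$ in the modular case; what yours buys is a direct reduction $h_q(p^km)=0 \Leftrightarrow h_q(m)=0$ that stands independently of the coprime classification.
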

	
	\vspace{0.8em}
	
	Thus, to determine if $h_q(n)=0$ when $p \mid n$, one simply removes all factors of $p$ from $n$ and applies Huang's criterion to the $p'$-part $m$. This provides a unified classification for all $n$.
	
	\vspace{0.8em}
	
	The paper is organized as follows. Section 2 develops the structure theory of the group algebra. Section 3 introduces the residue trace map and performs the reduction. Section 4 concludes with remarks and future directions.
	
	\newpage
	
	\section{Structure of the Group Algebra}
	
	\subsection{Preliminaries on Group Algebras}
	Let $G$ be a finite group. The group algebra $\mathbb{F}_q G$ consists of formal sums $\sum_{g \in G} a_g g$ with $a_g \in \mathbb{F}_q$, with addition defined componentwise and multiplication extended linearly from the group multiplication. When $G = \langle g \rangle$ is cyclic of order $n$, the map $g \mapsto x$ induces an isomorphism $\mathbb{F}_q G \cong \mathbb{F}_q[x]/\langle x^n - 1 \rangle$.
	
	\vspace{0.8em}
	
	This identification is fundamental because it allows us to use polynomial algebra techniques to study the group algebra structure. In particular, when studying cyclically covering subspaces of $\mathbb{F}_q^n$, we can equivalently study certain subspaces of $\mathbb{F}_q[x]/\langle x^n - 1 \rangle$ that are invariant under multiplication by $x$ (which corresponds to the cyclic shift).
	
	\subsection{Decomposition in Modular Characteristic}
	When $p = \operatorname{char}(\mathbb{F}_q)$ divides $n = |G|$, we are in the \textit{modular characteristic} case where the group algebra is not semisimple. This presents additional algebraic challenges compared to the coprime case. Write $n = p^k m$ with $\gcd(m, p) = 1$. Let $H = \langle h \rangle$ be a cyclic group of order $m$.
	
	\vspace{0.8em}
	
	The following theorem provides the foundational decomposition that separates the $p$-part from the $p'$-part of the group algebra structure. This decomposition is inspired by classical ideas in modular representation theory, but we present it here in a form specifically tailored for our application to cyclically covering subspaces.
	
	\begin{theorem}\label{thm:main_decomp}
		There exists an $\mathbb{F}_q$-algebra isomorphism
		\[
		\Psi: \mathbb{F}_q G \xrightarrow{\sim} \mathbb{F}_q H[u] / \langle u^{p^k} \rangle,
		\]
		defined on the generator by $\Psi(g) = h(1+u)$.
	\end{theorem}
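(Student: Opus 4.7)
The plan is to define $\Psi$ on the polynomial presentation $\mathbb{F}_q G \cong \mathbb{F}_q[x]/\langle x^n-1\rangle$ by sending $x \mapsto h(1+u)$, verify that this assignment descends to a well-defined $\mathbb{F}_q$-algebra homomorphism, establish surjectivity by using that $\gcd(p^k,m)=1$ to recover the generators $h$ and $u$ of the codomain from the image, and finally invoke a dimension count to upgrade surjectivity to an isomorphism.

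The first step is well-definedness. Since $\mathbb{F}_q H[u]/\langle u^{p^k}\rangle$ is commutative and $h$ and $1+u$ commute, it suffices to compute $(h(1+u))^n = h^n (1+u)^n$ and check it equals $1$. The factor $h^n = h^{p^k m} = (h^m)^{p^k}=1$ since $h$ has order $m$. For the other factor, the key identity in characteristic $p$ is the freshman's dream $(1+u)^{p^j} = 1+u^{p^j}$; iterating this $k$ times and reducing modulo $u^{p^k}$ gives $(1+u)^{p^k}=1$ inside the quotient, so $(1+u)^n = ((1+u)^{p^k})^m = 1$. This produces a well-defined $\mathbb{F}_q$-algebra map $\Psi$.

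For surjectivity, I would exploit the coprimality $\gcd(p^k,m)=1$ via a Bezout-type argument. Computing $\Psi(g^{p^k}) = h^{p^k}(1+u)^{p^k} = h^{p^k}\cdot 1 = h^{p^k}$, and since $p^k$ is a unit modulo $m$, the element $h^{p^k}$ generates $H$; raising $\Psi(g^{p^k})$ to an appropriate power therefore puts $h$ itself in the image. Consequently $1+u = h^{-1}\Psi(g)$ lies in the image, hence so does $u$. Since the image contains both the subalgebra $\mathbb{F}_q H$ and the element $u$, and these together generate $\mathbb{F}_q H[u]/\langle u^{p^k}\rangle$ as an $\mathbb{F}_q$-algebra, $\Psi$ is surjective.

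Finally, both algebras have $\mathbb{F}_q$-dimension $n = p^k m$: the domain is clear, and the codomain has $\mathbb{F}_q$-basis $\{h^i u^j : 0 \le i < m,\ 0 \le j < p^k\}$. A surjective linear map between finite-dimensional spaces of equal dimension is an isomorphism, completing the proof. I expect no genuine obstacle here; the only subtle point is recognizing that $1+u$ is a unit of multiplicative order exactly $p^k$ in the codomain, which is precisely what makes $h(1+u)$ an element of order $n$ capable of receiving the generator $g$. The rest is formal manipulation within a commutative ring.
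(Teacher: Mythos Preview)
Your proposal is correct and follows essentially the same approach as the paper: both arguments rest on the characteristic-$p$ identity $(1+u)^{p^k}=1$ (equivalently $(g^m-1)^{p^k}=0$), use the coprimality $\gcd(p^k,m)=1$ to establish surjectivity, and finish with a dimension count. The only cosmetic difference is that the paper constructs the inverse map $\mathbb{F}_q H[u]/\langle u^{p^k}\rangle \to \mathbb{F}_q G$ first (sending $h\mapsto g^{p^k}$, $u\mapsto g^m-1$) and then inverts, whereas you build $\Psi$ directly in the stated direction.
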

	
	\begin{proof}
		We construct the isomorphism explicitly through a sequence of careful steps.
		
		\vspace{0.5em}
		
		\textbf{1. Constructing a homomorphism.} Define an $\mathbb{F}_q$-algebra homomorphism
		\[
		\psi: \mathbb{F}_q H[u] \to \mathbb{F}_q G
		\]
		by setting $\psi(h) = g^{p^k}$ and $\psi(u) = g^m - 1$, and extending linearly and multiplicatively. Since $\mathbb{F}_q G$ is commutative, the images $\psi(h)$ and $\psi(u)$ commute, ensuring $\psi$ is well-defined (it respects the relation $uh = hu$ in the polynomial ring).
		
		\vspace{0.5em}
		
		\textbf{2. Factoring through the quotient.} In characteristic $p$, we have the binomial identity $(x-1)^{p^k} = x^{p^k} - 1$. Applying this with $x = g^m$, we obtain
		\[
		\psi(u^{p^k}) = (g^m - 1)^{p^k} = g^{m p^k} - 1 = g^n - 1 = 0.
		\]
		Hence $\psi$ factors through the quotient, inducing a well-defined $\mathbb{F}_q$-algebra homomorphism
		\[
		\Psi: \mathbb{F}_q H[u] / \langle u^{p^k} \rangle \to \mathbb{F}_q G.
		\]
		
		\vspace{0.5em}
		
		\textbf{3. Surjectivity of $\Psi$.} Since $\gcd(p^k, m) = 1$, there exist integers $a, b$ with $a p^k + b m = 1$ (Bézout's identity). Using this relation, we can express the generator $g$ of $\mathbb{F}_q G$ in terms of $\psi(h)$ and $\psi(u)$:
		\[
		g = g^{a p^k + b m} = (g^{p^k})^a (g^m)^b = \psi(h)^a (1 + \psi(u))^b = \Psi(h(1+u)^b).
		\]
		The last equality follows from the fact that $\Psi(h) = \psi(h) = g^{p^k}$ and $\Psi(u) = \psi(u) = g^m - 1$, so $\Psi(1+u) = g^m$. Since $g$ generates $\mathbb{F}_q G$ as an algebra, the image of $\Psi$ contains a generating set, proving that $\Psi$ is surjective.
		
		\vspace{0.5em}
		
		\textbf{4. Injectivity via dimension comparison.} The domain and codomain of $\Psi$ have the same finite dimension over $\mathbb{F}_q$:
		\begin{itemize}
			\item The algebra $\mathbb{F}_q H[u] / \langle u^{p^k} \rangle$ has a natural $\mathbb{F}_q$-basis $\{h^i u^j \mid 0 \le i \le m-1, \; 0 \le j \le p^k-1\}$, giving dimension $m \cdot p^k$.
			\item The group algebra $\mathbb{F}_q G$ has basis $\{g^0, g^1, \dots, g^{n-1}\}$, with dimension $n = p^k m$.
		\end{itemize}
		Since we have established that $\Psi$ is a surjective $\mathbb{F}_q$-linear map, and the domain and codomain have equal finite dimension, $\Psi$ must also be injective.
		
		\vspace{0.5em}
		
		\textbf{5. Conclusion.} The map $\Psi$ is an $\mathbb{F}_q$-algebra homomorphism that is both injective and surjective, hence an isomorphism. By construction, its inverse sends $g$ to $h(1+u)$, completing the proof.
	\end{proof}
	
	\vspace{0.8em}
	
	\begin{remark}
		The isomorphism in Theorem \ref{thm:main_decomp} exhibits $\mathbb{F}_q G$ as a deformation of the semisimple algebra $\mathbb{F}_q H$ by a nilpotent element $u$ satisfying $u^{p^k}=0$. The map $g \mapsto h(1+u)$ precisely captures how the original group element deforms into a product of a semisimple component $h$ and a unipotent factor $(1+u)$. This structural insight is crucial for our subsequent analysis.
	\end{remark}
	
	\subsection{Semisimple Decomposition of $\mathbb{F}_q H$}
	Since $\gcd(m, p)=1$, the group algebra $\mathbb{F}_q H$ is semisimple by Maschke's Theorem. We now describe its decomposition into simple components, which are finite field extensions of $\mathbb{F}_q$.
	
	\vspace{0.8em}
	
	Let $\theta$ be a primitive $m$th root of unity in some extension field of $\mathbb{F}_q$. The $q$-cyclotomic cosets modulo $m$ partition the ring $\mathbb{Z}_m$. Let these cosets be $\Gamma_0, \Gamma_1, \ldots, \Gamma_{r-1}$ with representatives $k_0=0, k_1, \ldots, k_{r-1}$. For each $t$, let $d_t = |\Gamma_t|$, which is the degree of the minimal polynomial of $\theta^{k_t}$ over $\mathbb{F}_q$. Then $\mathbb{F}_{q^{d_t}} = \mathbb{F}_q(\theta^{k_t})$.
	
	\vspace{0.8em}
	
	By the Chinese Remainder Theorem and the decomposition of $x^m - 1$ into irreducible factors over $\mathbb{F}_q$, we have:
	\[
	\mathbb{F}_q H \cong \bigoplus_{t=0}^{r-1} \mathbb{F}_{q}[x]/\langle f_t(x) \rangle \cong \bigoplus_{t=0}^{r-1} \mathbb{F}_{q^{d_t}},
	\]
	where $f_t(x)$ is the minimal polynomial of $\theta^{k_t}$ over $\mathbb{F}_q$.
	
	\vspace{0.8em}
	
	Combining this decomposition with Theorem \ref{thm:main_decomp} gives the complete structural decomposition of $\mathbb{F}_q G$:
	
	\begin{theorem}\label{thm:field_decomp}
		Under the isomorphism of Theorem \ref{thm:main_decomp}, we have
		\[
		\mathbb{F}_q G \cong \bigoplus_{t=0}^{r-1} A_t, \quad \text{where } A_t = \mathbb{F}_{q^{d_t}}[u] / \langle u^{p^k} \rangle.
		\]
		The generator $g$ maps to $\bigoplus_{t=0}^{r-1} \theta^{k_t}(1+u)$.
	\end{theorem}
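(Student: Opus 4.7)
The plan is to combine the isomorphism of Theorem \ref{thm:main_decomp} with the semisimple decomposition $\mathbb{F}_q H \cong \bigoplus_t \mathbb{F}_{q^{d_t}}$ established just above, treating the nilpotent variable $u$ as an independent tensor factor that passes cleanly through the direct-sum decomposition. The $p'$-part is already handled by Maschke's theorem and the cyclotomic coset machinery, while the $p$-part is encoded purely in the element $u$ with $u^{p^k}=0$; the two pieces should therefore combine essentially formally.

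First I would rewrite the target of Theorem \ref{thm:main_decomp} as a base change, using the standard identification
\[
\mathbb{F}_q H[u]/\langle u^{p^k}\rangle \;\cong\; \mathbb{F}_q H \otimes_{\mathbb{F}_q} \mathbb{F}_q[u]/\langle u^{p^k}\rangle.
\]
Substituting the semisimple decomposition and using that $\otimes_{\mathbb{F}_q}$ distributes over finite direct sums, I would obtain
\[
\mathbb{F}_q H \otimes_{\mathbb{F}_q} \mathbb{F}_q[u]/\langle u^{p^k}\rangle \;\cong\; \bigoplus_{t=0}^{r-1}\bigl(\mathbb{F}_{q^{d_t}}\otimes_{\mathbb{F}_q}\mathbb{F}_q[u]/\langle u^{p^k}\rangle\bigr) \;\cong\; \bigoplus_{t=0}^{r-1}\mathbb{F}_{q^{d_t}}[u]/\langle u^{p^k}\rangle \;=\; \bigoplus_{t=0}^{r-1} A_t,
\]
the last identification being the usual extension-of-scalars description of a polynomial ring over a field extension.

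For the image of the generator, I would track $g$ through the composite. By Theorem \ref{thm:main_decomp}, $g\mapsto h(1+u)$; under the CRT semisimple isomorphism, $h$ corresponds to $x$, which maps to $\theta^{k_t}$ in the $t$-th factor $\mathbb{F}_q[x]/\langle f_t(x)\rangle \cong \mathbb{F}_{q^{d_t}}$, so $h\mapsto \bigoplus_t \theta^{k_t}$. The factor $1+u$ sits in the second tensor slot and therefore acts as $1+u$ in every summand, producing $g\mapsto \bigoplus_t \theta^{k_t}(1+u)$, as claimed. The argument is almost entirely formal; the only point that warrants attention is confirming that each identification is an $\mathbb{F}_q$-algebra isomorphism (not merely a module isomorphism) and that the CRT decomposition of $\mathbb{F}_q H$ is compatible with adjoining $u$, which is automatic because $u$ is central, nilpotent, and algebraically independent from $\mathbb{F}_q H$ over $\mathbb{F}_q$ up to the relation $u^{p^k}=0$.
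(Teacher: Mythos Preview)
Your proposal is correct and follows essentially the same approach as the paper: both apply the semisimple decomposition $\mathbb{F}_q H \cong \bigoplus_t \mathbb{F}_{q^{d_t}}$ to the target of Theorem~\ref{thm:main_decomp} and then track $g \mapsto h(1+u)$ componentwise. Your tensor-product formulation makes the passage of $u$ through the direct sum slightly more explicit than the paper's terse ``all operations are $\mathbb{F}_q$-linear,'' but the argument is the same.
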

	
	\begin{proof}
		This follows from applying the isomorphism $\mathbb{F}_q H \cong \bigoplus_{t=0}^{r-1} \mathbb{F}_{q^{d_t}}$ to the right-hand side of Theorem \ref{thm:main_decomp}. The direct sum decomposition is preserved because all operations are $\mathbb{F}_q$-linear. The image of $g$ is obtained by noting that under the isomorphism $\mathbb{F}_q H \cong \bigoplus_t \mathbb{F}_{q^{d_t}}$, the element $h$ corresponds to $\bigoplus_t \theta^{k_t}$, and then applying the map $g \mapsto h(1+u)$.
	\end{proof}
	
	\vspace{0.8em}
	
	\begin{remark}
		This decomposition is fundamental to our approach. It expresses the (generally non-semisimple) algebra $\mathbb{F}_q G$ as a direct sum of simpler algebras $A_t$, each being a truncated polynomial ring over a finite field $\mathbb{F}_{q^{d_t}}$. The cyclic shift action on $\mathbb{F}_q G$ translates into a componentwise action where on each $A_t$, it is generated by multiplication by $X_t = \theta^{k_t}(1+u)$. This structural clarity enables us to analyze the covering subspace problem one component at a time.
	\end{remark}
	
	\newpage
	
	\section{The Residue Trace Map and Proof of the Main Theorem}
	
	\subsection{Motivation and Definition of the Residue Trace Map}
	Having established the decomposition $\mathbb{F}_q G \cong \bigoplus_t A_t$, we now need a tool to analyze the covering condition within each component $A_t = \mathbb{F}_{q^{d_t}}[u] / \langle u^{p^k} \rangle$. The key challenge is to connect the modular structure (which involves the nilpotent element $u$) with the field-theoretic trace condition that appears in Huang's characterization for the coprime case.
	
	\vspace{0.8em}
	
	The \textit{residue trace map} serves this purpose. It is designed to extract the coefficient of the highest possible power of $u$ (namely $u^{p^k-1}$) and then apply the field trace from $\mathbb{F}_{q^{d_t}}$ down to $\mathbb{F}_q$. The choice of $u^{p^k-1}$ is significant: in the algebra $A_t$ where $u^{p^k}=0$, this term occupies a privileged position analogous to a "residue" in the theory of formal power series. It is the highest-degree term that survives multiplication without annihilating the product's leading structure.
	
	\begin{definition}[Residue Trace Map]\label{def:residue_trace}
		For each component algebra $A_t = \mathbb{F}_{q^{d_t}}[u] / \langle u^{p^k} \rangle$, define the \textit{residue trace} $\operatorname{ResTr}_t: A_t \to \mathbb{F}_q$ as follows. For an arbitrary element $a = \sum_{\ell=0}^{p^k-1} a_\ell u^\ell \in A_t$ with coefficients $a_\ell \in \mathbb{F}_{q^{d_t}}$, set
		\[
		\operatorname{ResTr}_t(a) = \operatorname{Tr}_{\mathbb{F}_{q^{d_t}}/\mathbb{F}_q}(a_{p^k-1}),
		\]
		where $\operatorname{Tr}_{\mathbb{F}_{q^{d_t}}/\mathbb{F}_q}: \mathbb{F}_{q^{d_t}} \to \mathbb{F}_q$ denotes the standard field trace.
	\end{definition}
	
	\vspace{0.8em}
	
	The map $\operatorname{ResTr}_t$ is clearly $\mathbb{F}_q$-linear. Its importance lies in the properties of the associated bilinear form.
	
	\subsection{Properties of the Residue Trace Map}
	\begin{lemma}\label{lem:nondegenerate}
		The bilinear form $B_t: A_t \times A_t \to \mathbb{F}_q$ defined by $B_t(a, b) = \operatorname{ResTr}_t(a \cdot b)$ is nondegenerate.
	\end{lemma}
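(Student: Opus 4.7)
The plan is to show nondegeneracy by exhibiting, for each nonzero $a \in A_t$, an explicit $b \in A_t$ with $B_t(a,b) \neq 0$. The structure of $A_t$ as a truncated polynomial ring over a field makes this constructive rather than abstract, and the key idea is to exploit the nilpotent element $u$ to ``slide'' any nonzero coefficient up into the residue position $u^{p^k-1}$.

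First I would fix a nonzero element $a = \sum_{\ell=0}^{p^k-1} a_\ell u^\ell \in A_t$ and let $j$ denote the smallest index with $a_j \neq 0$, so that $a = a_j u^j + \sum_{\ell > j} a_\ell u^\ell$. Next, I would try the ansatz $b = c\, u^{p^k-1-j}$ for a scalar $c \in \mathbb{F}_{q^{d_t}}$ to be chosen. Multiplying out in $A_t$ and remembering that $u^{p^k} = 0$, every term $a_\ell u^\ell \cdot c u^{p^k-1-j}$ with $\ell > j$ lies in degree $\geq p^k$ and therefore vanishes, while the $\ell = j$ term contributes exactly $a_j c\, u^{p^k-1}$. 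Hence the coefficient of $u^{p^k-1}$ in $a b$ is precisely $a_j c$, and by Definition \ref{def:residue_trace},
\[
B_t(a,b) = \operatorname{Tr}_{\mathbb{F}_{q^{d_t}}/\mathbb{F}_q}(a_j c).
\]

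The final step is to choose $c$ so that this trace is nonzero. Since $a_j \neq 0$, the multiplication map $c \mapsto a_j c$ is an $\mathbb{F}_q$-linear bijection on $\mathbb{F}_{q^{d_t}}$. Because the field trace $\operatorname{Tr}_{\mathbb{F}_{q^{d_t}}/\mathbb{F}_q}$ is a nonzero $\mathbb{F}_q$-linear functional (a standard fact for separable extensions, and in particular for finite fields), it is surjective, so its composition with the bijection $c \mapsto a_j c$ is still surjective. In particular, there exists $c \in \mathbb{F}_{q^{d_t}}$ with $\operatorname{Tr}_{\mathbb{F}_{q^{d_t}}/\mathbb{F}_q}(a_j c) \neq 0$, giving $B_t(a,b) \neq 0$.

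I do not anticipate a real obstacle in this argument; the only subtle point is conceptual rather than technical, namely recognizing that $u^{p^k-1}$ plays the role of a ``residue'' and that multiplication by $u^{p^k-1-j}$ is exactly the operation that isolates the lowest nonzero coefficient of $a$ into this top slot. Once this is noticed, the argument reduces to the surjectivity of the field trace. The same computation shows symmetrically that for every nonzero $b$ one can find $a$ with $B_t(a,b) \neq 0$, so the form is nondegenerate on both sides (which in finite dimension is automatic from one-sided nondegeneracy anyway).
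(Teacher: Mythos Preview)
Your proof is correct and follows essentially the same approach as the paper: both isolate the lowest nonzero coefficient $a_j$ by multiplying by $u^{p^k-1-j}$ to push it into the residue slot. The only minor difference is in the last step: the paper takes the specific scalar $c = a_j^{-1}$, obtaining $B_t(a,b) = \operatorname{Tr}_{\mathbb{F}_{q^{d_t}}/\mathbb{F}_q}(1) = d_t$, and then uses that $d_t \mid m$ together with $\gcd(m,p)=1$ to conclude $d_t \not\equiv 0 \pmod p$; you instead appeal to the surjectivity of the field trace to find a suitable $c$, which sidesteps the need for that divisibility observation.
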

	
	\begin{proof}
		We need to show that for every nonzero $a \in A_t$, there exists some $b \in A_t$ such that $B_t(a, b) \neq 0$.
		
		\vspace{0.5em}
		
		Let $a = \sum_{\ell=0}^{p^k-1} a_\ell u^\ell \in A_t$ be nonzero. Let $r$ be the smallest index such that $a_r \neq 0$ (the \textit{valuation} of $a$ with respect to $u$). Consider the element $b = a_r^{-1} u^{p^k-1-r} \in A_t$.
		
		\vspace{0.5em}
		
		When computing the product $a \cdot b$, the term $a_r u^r \cdot a_r^{-1} u^{p^k-1-r} = u^{p^k-1}$ arises. Any other product $a_\ell u^\ell \cdot a_r^{-1} u^{p^k-1-r}$ for $\ell > r$ yields a factor $u^{\ell + p^k-1-r}$ with exponent $\geq p^k$, and thus is zero in $A_t$ since $u^{p^k}=0$. Terms with $\ell < r$ are zero by definition of $r$.
		
		\vspace{0.5em}
		
		Therefore, $a \cdot b = u^{p^k-1} + \text{(terms that vanish)}$, and we have:
		\[
		B_t(a, b) = \operatorname{ResTr}_t(a \cdot b) = \operatorname{Tr}_{\mathbb{F}_{q^{d_t}}/\mathbb{F}_q}(1) = d_t.
		\]
		
		Since $d_t$ (the degree of the field extension $\mathbb{F}_{q^{d_t}}/\mathbb{F}_q$) is a positive integer that divides $m$, and $\gcd(m, p)=1$, we have $p \nmid d_t$. Hence $d_t \neq 0$ in $\mathbb{F}_q$, so $B_t(a, b) \neq 0$.
	\end{proof}
	
	\vspace{0.8em}
	
	The nondegeneracy of $B_t$ has an important consequence for the structure of hyperplanes in $A_t$:
	
	\begin{proposition}\label{prop:hyperplane}
		Every hyperplane (codimension-1 subspace) $W \subset A_t$ can be written in the form
		\[
		W = W_c := \{ a \in A_t \mid \operatorname{ResTr}_t(c \cdot a) = 0 \}
		\]
		for some nonzero $c \in A_t$. Moreover, $W_c = W_{c'}$ if and only if $c' = \lambda c$ for some $\lambda \in \mathbb{F}_q^\times$.
	\end{proposition}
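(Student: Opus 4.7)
The plan is to exploit the nondegeneracy of $B_t$ established in Lemma~\ref{lem:nondegenerate} to set up a finite-dimensional duality between $A_t$ and its $\mathbb{F}_q$-linear dual $A_t^{\ast} := \operatorname{Hom}_{\mathbb{F}_q}(A_t, \mathbb{F}_q)$. I would introduce the $\mathbb{F}_q$-linear map $\Phi\colon A_t \to A_t^{\ast}$ defined by $\Phi(c)(a) = \operatorname{ResTr}_t(c \cdot a) = B_t(c, a)$; this is $\mathbb{F}_q$-linear in $c$ by linearity of the trace and of multiplication in the first slot. Everything in the proposition will then be read off from this single map.

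First I would check that $\Phi$ is an isomorphism: nondegeneracy of $B_t$ forces $\Phi(c) = 0$ to imply $c = 0$, so $\Phi$ is injective; since $\dim_{\mathbb{F}_q} A_t = d_t p^k = \dim_{\mathbb{F}_q} A_t^{\ast}$ is finite, $\Phi$ is automatically bijective. For the existence claim, I would invoke the standard fact that every hyperplane $W \subset A_t$ is the kernel of some nonzero linear functional $\phi \in A_t^{\ast}$, and then set $c := \Phi^{-1}(\phi)$; this $c$ is nonzero (because $\phi \neq 0$) and satisfies $W = \ker \phi = W_c$ by construction.

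For the uniqueness-up-to-scalar statement, suppose $W_c = W_{c'}$ with $c, c' \neq 0$. Then $\Phi(c)$ and $\Phi(c')$ are two nonzero functionals on $A_t$ sharing the same hyperplane as kernel. By the textbook fact that a hyperplane determines its defining functional up to a scalar in $\mathbb{F}_q^{\times}$ (both functionals descend to nonzero maps on the one-dimensional quotient $A_t/W$, which must be scalar multiples of each other), there exists $\lambda \in \mathbb{F}_q^{\times}$ with $\Phi(c') = \lambda \Phi(c) = \Phi(\lambda c)$; injectivity of $\Phi$ then yields $c' = \lambda c$. The converse direction is immediate from the $\mathbb{F}_q$-linearity of $B_t$ in its first argument.

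I do not expect a substantive obstacle: given Lemma~\ref{lem:nondegenerate}, the argument is a routine application of finite-dimensional duality, and the only step worth spelling out is the proportionality of two functionals sharing a hyperplane kernel. The main conceptual content has already been absorbed into the nondegeneracy lemma.
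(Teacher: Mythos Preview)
Your proposal is correct and follows essentially the same route as the paper: the paper's proof also invokes the nondegeneracy of $B_t$ to identify $A_t$ with its dual via $c \mapsto B_t(c,\cdot)$, and then quotes the standard correspondence between hyperplanes and nonzero functionals up to scalar. Your write-up is more explicit (introducing $\Phi$ by name, spelling out the dimension count and the proportionality argument on $A_t/W$), but there is no genuine difference in strategy.
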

	
	\begin{proof}
		Since $B_t$ is nondegenerate, the map $a \mapsto B_t(c, a) = \operatorname{ResTr}_t(c \cdot a)$ is a nonzero linear functional whenever $c \neq 0$. Every linear functional on $A_t$ arises in this way for a unique $c$ (up to scalar multiplication). The kernel of such a functional is a hyperplane, and all hyperplanes arise as such kernels.
	\end{proof}
	
	\subsection{Translating the Covering Condition to Algebraic Form}
	Recall from Theorem \ref{thm:field_decomp} that under our isomorphisms, the cyclic shift action on $\mathbb{F}_q G$ corresponds on each component $A_t$ to multiplication by the element $X_t = \theta^{k_t}(1+u)$. A subspace $U \subset \mathbb{F}_q G$ is cyclically covering if and only if for each $t$, the projection of $U$ to $A_t$ intersects every orbit of the cyclic group $\langle X_t \rangle$.
	
	\vspace{0.8em}
	
	To characterize when $h_q(n)=0$ (i.e., when no proper subspace is covering), we analyze when a hyperplane in some $A_t$ can fail to be covering. A hyperplane $W \subset A_t$ fails to be covering if it contains a complete orbit of $\langle X_t \rangle$, which means there exists some $b \in W$ such that $\{X_t^i \cdot b \mid i = 0, 1, \ldots, n-1\} \subseteq W$.
	
	\vspace{0.8em}
	
	Using Proposition \ref{prop:hyperplane}, we can write $W = W_c$ for some nonzero $c \in A_t$. The condition that $W_c$ contains the orbit of $b$ under $\langle X_t \rangle$ becomes:
	\[
	\operatorname{ResTr}_t(c \cdot (X_t^i \cdot b)) = 0 \quad \text{for all integers } i.
	\]
	
	\vspace{0.8em}
	
	Therefore, the condition for the existence of a \textit{proper} cyclically covering subspace (i.e., $h_q(n) > 0$) is:
	\begin{center}
		\textbf{There exist some $t$, a nonzero $c \in A_t$, and an element $b \in A_t$ such that} \\
		$\operatorname{ResTr}_t(c \cdot (X_t^i \cdot b)) = 0$ \textbf{for all } $i$.
	\end{center}
	
	\vspace{0.8em}
	
	Conversely, $h_q(n)=0$ if and only if for every $t$ and every nonzero $c \in A_t$, there is \textbf{no} $b \in A_t$ satisfying this condition.
	
	\subsection{Reduction to a Universal Element}
	We now simplify this condition through a series of lemmas. The first simplification shows that we can reduce the consideration from all nonzero $c$ to a single specific element.
	
	\begin{lemma}\label{lem:universal_element}
		For any nonzero element $c \in A_t$, there exists an element $v \in A_t$ such that $c \cdot v = u^{p^k-1}$.
	\end{lemma}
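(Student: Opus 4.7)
The plan is to exploit the fact that $A_t = \mathbb{F}_{q^{d_t}}[u]/\langle u^{p^k}\rangle$ is a local ring with maximal ideal $(u)$ and residue field $\mathbb{F}_{q^{d_t}}$, so that every nonzero element factors as a power of $u$ times a unit. Once $c$ is written in this normal form, constructing $v$ reduces to inverting the unit and multiplying by an appropriate power of $u$.

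More precisely, I would first write any nonzero $c \in A_t$ as $c = \sum_{\ell=0}^{p^k-1} c_\ell u^\ell$ and let $r$ be the smallest index with $c_r \neq 0$; since $c \neq 0$, such an $r$ exists and satisfies $0 \le r \le p^k-1$. Factor $c = u^r \cdot c^\ast$, where $c^\ast = c_r + c_{r+1}u + \cdots + c_{p^k-1}u^{p^k-1-r}$ has nonzero constant term. The next step is to show $c^\ast$ is a unit in $A_t$. I would do this by the standard geometric-series argument: write $c^\ast = c_r(1 - w)$ with $w \in (u)$; since $u^{p^k}=0$ in $A_t$, the element $w$ is nilpotent with $w^{p^k}=0$, so $c^\ast$ has inverse $c_r^{-1}(1 + w + w^2 + \cdots + w^{p^k-1})$.

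With $c^\ast$ invertible, I simply set
\[
v = (c^\ast)^{-1} \cdot u^{p^k-1-r},
\]
which lies in $A_t$ because $p^k - 1 - r \ge 0$. Then
\[
c \cdot v = u^r \cdot c^\ast \cdot (c^\ast)^{-1} \cdot u^{p^k-1-r} = u^{p^k-1},
\]
as required.

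The only place needing care is the verification that $c^\ast$ is genuinely invertible, but this is immediate from the nilpotence of $u$ in $A_t$, so I do not expect any real obstacle. The content of the lemma is essentially the observation that $u^{p^k-1}$ generates the socle (the unique minimal ideal) of the local ring $A_t$, so every nonzero principal ideal $(c)$ contains it.
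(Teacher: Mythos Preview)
Your proof is correct and follows essentially the same approach as the paper: identify the valuation $r$ of $c$ and multiply by a suitable element times $u^{p^k-1-r}$. The paper's argument is marginally more direct, taking $v = c_r^{-1} u^{p^k-1-r}$ and observing that all higher-order terms in $c \cdot v$ have $u$-exponent at least $p^k$ and hence vanish, so there is no need to compute the full inverse of your $c^\ast$.
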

	
	\begin{proof}
		Write $c = \sum_{\ell=r}^{p^k-1} a_\ell u^\ell$ with $a_r \neq 0$ (where $r$ is the valuation of $c$). Take $v = a_r^{-1} u^{p^k-1-r}$. Then
		\[
		c \cdot v = \left(a_r u^r + \sum_{\ell=r+1}^{p^k-1} a_\ell u^\ell\right) \cdot a_r^{-1} u^{p^k-1-r} = u^{p^k-1} + \sum_{\ell=r+1}^{p^k-1} a_\ell a_r^{-1} u^{\ell + p^k-1-r}.
		\]
		For each $\ell \geq r+1$, we have $\ell + p^k-1-r \geq p^k$, so $u^{\ell + p^k-1-r} = 0$ in $A_t$. Hence $c \cdot v = u^{p^k-1}$.
	\end{proof}
	
	\vspace{0.8em}
	
	This lemma has an important consequence: if there exists $b_0 \in A_t$ such that $\operatorname{ResTr}_t(u^{p^k-1} \cdot (X_t^i \cdot b_0)) \neq 0$ for all $i$, then for any nonzero $c$, taking $b = v \cdot b_0$ (where $v$ is as in Lemma \ref{lem:universal_element}) gives
	\[
	\operatorname{ResTr}_t(c \cdot (X_t^i \cdot b)) = \operatorname{ResTr}_t((c \cdot v) \cdot (X_t^i \cdot b_0)) = \operatorname{ResTr}_t(u^{p^k-1} \cdot (X_t^i \cdot b_0)) \neq 0.
	\]
	
	\vspace{0.8em}
	
	Thus, the existence of a suitable $b$ for a general $c$ is equivalent to the existence of a suitable $b_0$ for the specific element $u^{p^k-1}$. This reduces our problem to studying the condition for this universal element.
	
	\subsection{Computing the Key Product}
	We now compute the product $u^{p^k-1} \cdot X_t^i$ explicitly.
	
	\begin{lemma}\label{lem:product_computation}
		For any integer $i$,
		\[
		u^{p^k-1} \cdot X_t^i = \theta^{k_t i} u^{p^k-1}.
		\]
	\end{lemma}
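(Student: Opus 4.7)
The plan is to exploit the fact that $u^{p^k-1}$ annihilates every positive power of $u$ in $A_t$, together with commutativity of the algebra, to collapse $X_t^i$ down to its scalar factor.

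First, I would use that $A_t = \mathbb{F}_{q^{d_t}}[u]/\langle u^{p^k}\rangle$ is commutative, so the scalar $\theta^{k_t}$ and the element $(1+u)$ commute and we may write
\[
X_t^i = \bigl(\theta^{k_t}(1+u)\bigr)^i = \theta^{k_t i}(1+u)^i.
\]
Next I would expand $(1+u)^i$ by the binomial theorem (valid in any commutative ring):
\[
(1+u)^i = \sum_{j=0}^{i}\binom{i}{j} u^{j} = 1 + \sum_{j=1}^{i}\binom{i}{j} u^{j}.
\]

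The core step is then to multiply by $u^{p^k-1}$ and observe that for every $j\ge 1$,
\[
u^{p^k-1}\cdot u^{j}=u^{p^k-1+j}=0\quad\text{in }A_t,
\]
since the exponent is at least $p^k$ and $u^{p^k}=0$. Hence only the $j=0$ term survives, giving $u^{p^k-1}\cdot(1+u)^i = u^{p^k-1}$. Multiplying through by the scalar $\theta^{k_t i}\in\mathbb{F}_{q^{d_t}}$ (which may be pulled past $u^{p^k-1}$ since elements of $\mathbb{F}_{q^{d_t}}$ are central in $A_t$) yields the claim.

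There is no real obstacle here: the statement is essentially the observation that $u^{p^k-1}$ is a ``top-residue'' element killed by the augmentation ideal $(u)$, so multiplying it by the unipotent factor $(1+u)^i$ has no effect. One minor point worth flagging is that the identity must hold for all integers $i$, including negative ones; this is automatic because $X_t$ is a unit in $A_t$ (the factor $\theta^{k_t}$ is invertible in $\mathbb{F}_{q^{d_t}}^\times$ and $(1+u)$ is a unit with inverse $\sum_{j\ge 0}(-u)^j$, a finite sum in $A_t$), so $X_t^i$ is well-defined, and the same binomial-plus-annihilation argument applies to the negative case by replacing $(1+u)^i$ with its explicit expansion in powers of $u$.
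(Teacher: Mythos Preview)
Your proof is correct and follows essentially the same route as the paper: write $X_t^i=\theta^{k_t i}(1+u)^i$, expand $(1+u)^i$ binomially, and observe that every term with $j\ge 1$ is annihilated by $u^{p^k-1}$ since $u^{p^k}=0$. Your extra remark handling negative $i$ via the unit expansion of $(1+u)^{-1}$ is a small refinement that the paper does not make explicit (its binomial sum $\sum_{j=0}^{i}$ tacitly assumes $i\ge 0$), but in context only nonnegative shifts are needed, so this is a cosmetic addition rather than a substantive difference.
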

	
	\begin{proof}
		Recall that $X_t = \theta^{k_t}(1+u)$, so $X_t^i = \theta^{k_t i}(1+u)^i$. Multiplying by $u^{p^k-1}$:
		\[
		u^{p^k-1} \cdot X_t^i = \theta^{k_t i} u^{p^k-1} (1+u)^i.
		\]
		
		Expand $(1+u)^i$ using the binomial theorem: $(1+u)^i = \sum_{j=0}^i \binom{i}{j} u^j$. Then
		\[
		u^{p^k-1} \cdot X_t^i = \theta^{k_t i} \sum_{j=0}^i \binom{i}{j} u^{p^k-1+j}.
		\]
		
		In $A_t$, we have $u^{p^k} = 0$. For any $j \geq 1$, $p^k-1+j \geq p^k$, so $u^{p^k-1+j} = 0$. The only term that survives is when $j = 0$, giving $\theta^{k_t i} u^{p^k-1}$.
	\end{proof}
	
	\vspace{0.8em}
	
	Now let $b_0 = \sum_{\ell=0}^{p^k-1} \beta_\ell u^\ell \in A_t$ with $\beta_\ell \in \mathbb{F}_{q^{d_t}}$. Using Lemma \ref{lem:product_computation}, we compute:
	\begin{align*}
		\operatorname{ResTr}_t(u^{p^k-1} \cdot (X_t^i \cdot b_0)) 
		&= \operatorname{ResTr}_t\left( \theta^{k_t i} u^{p^k-1} \cdot \sum_{\ell=0}^{p^k-1} \beta_\ell u^\ell \right) \\
		&= \operatorname{ResTr}_t\left( \theta^{k_t i} \sum_{\ell=0}^{p^k-1} \beta_\ell u^{p^k-1+\ell} \right).
	\end{align*}
	
	Since $u^{p^k}=0$, the only term in the sum that contributes a $u^{p^k-1}$ factor is when $\ell = 0$. All other terms have $u^{p^k-1+\ell}$ with $\ell \geq 1$, giving exponent $\geq p^k$, hence vanish. Therefore,
	\[
	\operatorname{ResTr}_t(u^{p^k-1} \cdot (X_t^i \cdot b_0)) = \operatorname{Tr}_{\mathbb{F}_{q^{d_t}}/\mathbb{F}_q}(\theta^{k_t i} \beta_0).
	\]
	
	\vspace{0.8em}
	
	Thus, the condition for the existence of $b_0$ such that $\operatorname{ResTr}_t(u^{p^k-1} \cdot (X_t^i \cdot b_0)) \neq 0$ for all $i$ simplifies dramatically: we need an element $\beta_0 \in \mathbb{F}_{q^{d_t}}$ such that
	\[
	\operatorname{Tr}_{\mathbb{F}_{q^{d_t}}/\mathbb{F}_q}(\theta^{k_t i} \beta_0) \neq 0 \quad \text{for all integers } i.
	\]
	
	\subsection{Connection to Huang's Trace Condition}
	Let $m_t$ be the multiplicative order of $\theta^{k_t}$ in $\mathbb{F}_{q^{d_t}}^\times$. Note that $m_t = m / \gcd(m, k_t)$. The values $\theta^{k_t i}$ are periodic with period $m_t$, so the condition need only be checked for $i = 0, 1, \ldots, m_t-1$.
	
	\vspace{0.8em}
	
	The condition $\operatorname{Tr}_{\mathbb{F}_{q^{d_t}}/\mathbb{F}_q}(\theta^{k_t i} \beta_0) \neq 0$ for all $i = 0, 1, \ldots, m_t-1$ is equivalent to the trace being nonvanishing on the entire coset $\beta_0 \langle \theta^{k_t} \rangle$ of the subgroup generated by $\theta^{k_t}$.
	
	\vspace{0.8em}
	
	Recall Huang's Theorem 3.7 \cite{Huang2024} for the coprime case: for $\gcd(m, q)=1$, $h_q(m)=0$ if and only if for each $q$-cyclotomic coset $\Gamma_t$ modulo $m$, there exists a coset of $\langle \theta^{k_t} \rangle$ in $\mathbb{F}_{q^{d_t}}^\times$ on which the trace is nonvanishing.
	
	\vspace{0.8em}
	
	The logical negation of Huang's condition is: $h_q(m) > 0$ if and only if there exists some $t$ such that for \emph{every} $\beta \in \mathbb{F}_{q^{d_t}}$, the trace vanishes for some element of the coset $\beta \langle \theta^{k_t} \rangle$. But this is exactly equivalent to: for some $t$, there exists $\beta_0 \in \mathbb{F}_{q^{d_t}}$ such that $\operatorname{Tr}_{\mathbb{F}_{q^{d_t}}/\mathbb{F}_q}(\theta^{k_t i} \beta_0) = 0$ for all $i$ (or equivalently, for $i=0,1,\ldots,m_t-1$).
	
	\vspace{0.8em}
	
	We have therefore arrived at a perfect correspondence. The condition that would allow the existence of a proper covering subspace in our decomposed algebra $\bigoplus_t A_t$ (namely, the existence of some $t$ and $\beta_0$ with $\operatorname{Tr}_{\mathbb{F}_{q^{d_t}}/\mathbb{F}_q}(\theta^{k_t i} \beta_0) = 0$ for all $i$) is precisely equivalent to the condition $h_q(m) > 0$ in Huang's theorem.
	
	\subsection{Proof of the Main Theorem}
	We can now assemble all the pieces to prove our main result.
	
	\begin{theorem}[Main Reduction Theorem]\label{thm:main}
		Let $q$ be a prime power of characteristic $p$, and let $n$ be a positive integer. Write $n = p^k m$ with $\gcd(m, p) = 1$. Then
		\[
		h_q(n) = 0 \quad \text{if and only if} \quad h_q(m) = 0.
		\]
	\end{theorem}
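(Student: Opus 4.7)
The plan is to chain together the preparations of Sections 2 and 3 into a string of equivalences that reduce $h_q(n) > 0$ to exactly the coset-trace condition Huang uses in the coprime case. The substantive work is already in place from the preceding subsections, and the proof amounts to an explicit assembly of those pieces.

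First I would reduce from arbitrary proper covering subspaces to covering hyperplanes (any proper covering subspace is contained in one), and then from general hyperplanes of $\mathbb{F}_q G$ to single-component hyperplanes of some $A_t$. For the latter, I would use Theorem~\ref{thm:field_decomp}: if $W = \ker\phi$ is a covering hyperplane of $\mathbb{F}_q G$ and $t$ is any index for which the restriction $\phi|_{A_t}$ is nonzero, then testing on vectors supported only on $A_t$ forces $\ker(\phi|_{A_t})$ to cover $A_t$ under the $X_t$-action. Thus $h_q(n) > 0$ if and only if some $A_t$ admits a nonzero $c$ for which $W_c \subset A_t$ covers $A_t$.

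Next I would apply Lemma~\ref{lem:universal_element} to collapse the quantifier over $c$: the construction $b = v \cdot b_0$ transports a disjoint-orbit witness for $c = u^{p^k - 1}$ to a disjoint-orbit witness for any other nonzero $c$, so some $W_c$ covers $A_t$ if and only if $W_{u^{p^k - 1}}$ itself covers $A_t$. Lemma~\ref{lem:product_computation}, combined with the direct calculation already carried out in Section~3.5, then rewrites "$W_{u^{p^k - 1}}$ covers $A_t$" as: for every $\beta_0 \in \mathbb{F}_{q^{d_t}}$ there exists an integer $i$ with $\operatorname{Tr}_{\mathbb{F}_{q^{d_t}}/\mathbb{F}_q}(\theta^{k_t i}\beta_0) = 0$.

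To close the argument I would invoke Huang's theorem in negated form: $h_q(m) > 0$ if and only if for some $t$, every coset of $\langle \theta^{k_t} \rangle$ in $\mathbb{F}_{q^{d_t}}^\times$ contains an element of zero trace. Since the cyclotomic-coset data ($r$, $k_t$, $d_t$, $m_t$) depend only on $m$ and $q$, this negated Huang condition is literally the same statement as the one extracted from the $A_t$-analysis. Therefore $h_q(n) > 0 \Leftrightarrow h_q(m) > 0$, equivalently $h_q(n) = 0 \Leftrightarrow h_q(m) = 0$. The step demanding the most care is the passage from an arbitrary covering hyperplane of $\mathbb{F}_q G$ to a single-component covering hyperplane of some $A_t$; it is a formal consequence of the direct-sum decomposition but should be written out explicitly so that the orbit-by-orbit reduction to Huang's criterion is unambiguous.
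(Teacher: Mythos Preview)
Your proposal is correct and follows essentially the same route as the paper's proof: pass from a covering hyperplane of $\mathbb{F}_q G$ to a single component $A_t$, use Lemma~\ref{lem:universal_element} to collapse the quantifier over $c$ to the specific hyperplane $W_{u^{p^k-1}}$, apply Lemma~\ref{lem:product_computation} and the Section~3.5 computation to extract the field-trace condition, and match it against the negation of Huang's criterion. Your explicit attention to the hyperplane-to-single-component passage and to the orientation of the orbit condition (a covering hyperplane is one that \emph{meets every orbit}, so its failure is witnessed by a \emph{disjoint} orbit) is well placed and, if anything, tidier than the paper's own phrasing in Section~3.3.
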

	
	\begin{proof}
		We prove both directions of the equivalence.
		
		\vspace{0.5em}
		
		\textbf{($\Rightarrow$) Suppose $h_q(n)=0$.} By the decomposition in Theorem \ref{thm:field_decomp} and the discussion in Section 3.3, the condition $h_q(n)=0$ means that for every index $t$ and every nonzero $c \in A_t$, there does \emph{not} exist $b \in A_t$ such that $\operatorname{ResTr}_t(c \cdot (X_t^i \cdot b)) = 0$ for all $i$.
		
		By Lemma \ref{lem:universal_element} and the subsequent discussion, this implies that for each $t$, there does not exist $\beta_0 \in \mathbb{F}_{q^{d_t}}$ such that $\operatorname{Tr}_{\mathbb{F}_{q^{d_t}}/\mathbb{F}_q}(\theta^{k_t i} \beta_0) = 0$ for all $i$ (where $i$ ranges over a complete set of residues modulo $m_t$).
		
		By the contrapositive of Huang's Theorem 3.7 \cite{Huang2024}, this is exactly the condition that $h_q(m)=0$.
		
		\vspace{0.5em}
		
		\textbf{($\Leftarrow$) Suppose $h_q(m)=0$.} By Huang's Theorem 3.7, for each $t$ (corresponding to a $q$-cyclotomic coset modulo $m$), there exists a coset of $\langle \theta^{k_t} \rangle$ in $\mathbb{F}_{q^{d_t}}^\times$ on which the trace is nonvanishing. This means that for each $t$, there does \emph{not} exist $\beta_0 \in \mathbb{F}_{q^{d_t}}$ such that $\operatorname{Tr}_{\mathbb{F}_{q^{d_t}}/\mathbb{F}_q}(\theta^{k_t i} \beta_0) = 0$ for all $i$ in a complete set of residues modulo $m_t$.
		
		By the reasoning in Section 3.4-3.6, this implies that for each $t$ and every nonzero $c \in A_t$, there does not exist $b \in A_t$ such that $\operatorname{ResTr}_t(c \cdot (X_t^i \cdot b)) = 0$ for all $i$. Therefore, no hyperplane in any $A_t$ can contain a full orbit of $\langle X_t \rangle$, which means no proper subspace of $\mathbb{F}_q G$ is cyclically covering. Hence $h_q(n)=0$.
	\end{proof}
	
	\vspace{0.8em}
	
	\begin{remark}
		The proof reveals the elegant structure underlying the reduction. The nilpotent element $u$, which encodes the $p$-part of $n$, does not create any new obstructions to the existence of trivial covering subspaces. It simply ``inherits'' the covering properties from the semisimple part associated with $m$. The residue trace map $\operatorname{ResTr}_t$ serves as the precise mechanism that filters out the $u$-dependence and reveals the essential trace condition on the finite field $\mathbb{F}_{q^{d_t}}$.
	\end{remark}
	
	\newpage
	
\section{Conclusion and Future Work}

We have resolved the open problem of characterizing when \(h_q(n)=0\) in the non-coprime case. Our main theorem establishes that for \(n = p^k m\) with \(\gcd(m, p)=1\), 
\[
h_q(n)=0 \quad \text{if and only if} \quad h_q(m)=0.
\]
This result fully reduces the classification in the modular characteristic case to the coprime case, which was completely solved by Huang.

Our proof employs the structure theory of cyclic group algebras. By constructing an explicit isomorphism \(\mathbb{F}_q G \cong \mathbb{F}_q H[u] / \langle u^{p^k} \rangle\) and introducing a residue trace map on the decomposed components, we successfully bridged the analysis of the non-semisimple algebra \(\mathbb{F}_q G\) to the trace condition that characterizes the coprime case.

\begin{center}
	\textbf{Funding.} Pingzhi Yuan was supported by the National Natural Science Foundation of China (Grant No. 12171163, No. 12571003). Danyao Wu was supported by the National Natural Science Foundation of China (Grants No. 12501006).
\end{center}

\section*{Declarations}
\textbf{Conflict of interest.} There is no conflict of interest.

	\section*{Declaration of competing interests}
	The authors declare that they have no known competing financial interests or personal relationships that could have appeared to influence the work reported in this paper.
	
	Shuang Li, Pingzhi Yuan
\end{document}